\newcommand{\C}{{\mathbb C}}
\newcommand{\Z}{{\mathbb Z}}
\newcommand{\I}{{\mathbb I}}
\newcommand{\U}{{\mathbf U}}
\newcommand{\K}{{\widetilde K}}
\newcommand{\hbracket}[2]{%
\begin{bmatrix}%
h; {#1}\\{#2}%
\end{bmatrix}%
}
\newcommand{\of}[1]{{\left({#1}\right)}}
\newcommand{\floor}[1]{{\left\lfloor{#1}\right\rfloor}}
\DeclareMathOperator{\ad}{ad}
\DeclareMathOperator{\End}{End}
\DeclareMathOperator{\Aut}{Aut}
\newcommand{\blue}[1]{{\color{blue}#1}}
\newcommand{\vs}{\varsigma}
\newcommand{\@spaceifnotempty}[1]{%
    \if\relax\detokenize{#1}\relax
    \else\ \fi
}
\newcommand{\@case}[3]{\text{if }{#2}\text{\@spaceifnotempty{#1}{#1}{#3}}}
\newcommand{\case}[2][]{{\@case{#1}{#2};}}
\newcommand{\lastcase}[2][]{{\@case{#1}{#2}.}}
\newcommand{\Iblack}{\I_{\bullet}}
\newcommand{\Iwhite}{\I_{\circ}}
\newcommand{\Isplit}{\Iwhite^\text{split}}
\newcommand{\wblack}{{w_\bullet}}
\newcommand{\Y}{\check{E}}
\newenvironment{bbmatrix}
    {\left\llbracket\begin{matrix}}
    {\end{matrix}\right\rrbracket}
\DeclareMathOperator{\SU}{SU}
\newcommand{\Uqisl}[1]{{\U_{q_i}\of{\mathfrak{sl}_{#1}}}}
\newcommand{\Uqsl}[1]{{\U_q\of{\mathfrak{sl}_{#1}}}}
\newcommand{\Ui}[1][i]{{\U_{#1}}}
\newcommand{\vbar}[1]{\left.{#1}\right|}
\newcommand{\texti}{\texorpdfstring{$\imath$}{ı}}
\tikzset{curve/.style={settings={#1},to path={(\tikztostart)
    .. controls ($(\tikztostart)!\pv{pos}!(\tikztotarget)!\pv{height}!270:(\tikztotarget)$)
    and ($(\tikztostart)!1-\pv{pos}!(\tikztotarget)!\pv{height}!270:(\tikztotarget)$)
    .. (\tikztotarget)\tikztonodes}},
    settings/.code={\tikzset{quiver/.cd,#1}
        \def\pv##1{\pgfkeysvalueof{/tikz/quiver/##1}}},
    quiver/.cd,pos/.initial=0.35,height/.initial=0}
\tikzset{tail reversed/.code={\pgfsetarrowsstart{tikzcd to}}}
\tikzset{2tail/.code={\pgfsetarrowsstart{Implies[reversed]}}}
\tikzset{2tail reversed/.code={\pgfsetarrowsstart{Implies}}}
\tikzset{no body/.style={/tikz/dash pattern=on 0 off 1mm}}
\newtheorem{theorem}{Theorem}[section]
\newtheorem{lemma}[theorem]{Lemma}
\newtheorem{proposition}[theorem]{Proposition}
\theoremstyle{definition}
\newtheorem{definition}{Definition}[section]
\theoremstyle{remark}
\newtheorem{remark}{Remark}[section]
\begin{document}

\title[A new approach to the {\texti}Serre and Serre-Lusztig relations]{A new approach to the {\texti}Serre and Serre-Lusztig relations for {\texti}quantum groups}
\author{Zachary Carlini}
\address{Department of Mathematics, University of Virginia, Charlottesville, VA 22904}
\email{zic4zfr@virginia.edu}

\keywords{ $\imath$Quantum groups, $\imath$Serre relations, quantum adjoint action}

\maketitle

\begin{abstract}
    We give a new, conceptual proof of the {\texti}Serre and Serre-Lusztig relations for {\texti}quantum groups. The key to our approach is a new formula for the comultiplication of the \texti-divided powers, which allows us to reformulate the relations in terms of the adjoint action. We then obtain a proof using properties of the adjoint representation. The flexibility of this approach allows us to establish a more general family of relations which seem difficult to establish otherwise.
\end{abstract}

\setcounter{tocdepth}{1}
\tableofcontents

\section{Introduction}

Associated to a Satake diagram (\(\I = \Iwhite \cup \Iblack, \tau\)) of finite type (or Kac-Moody type), one defines a quantum symmetric pair \((\U, \U^\imath_\vs)\) \cite{Let99} (also see \cite{Ko14}) that can be seen as a quantum analog of the associated classical symmetric pair. While the coideal subalgebra \(\U^\imath_\vs\) of $\U$ is not in general a quantum group, every quantum group \(\U\) appears in a diagonal quantum symmetric pair \(\of{\U \otimes \U, \U}\); cf. \cite{BW18b}. Thus, the algebras of the form \(\U^\imath_\vs\) serve as a generalization of quantum groups which we will refer to as {\texti}quantum groups. We say that an \(\imath\)quantum group is \emph{quasi-split} if \(\Iblack = \emptyset\), and \emph{split} if, additionally, \(\tau\) is the trivial automorphism. 

A presentation of {\texti}quantum groups of finite type was obtained by Letzter \cite{Let02, Let03}, which involved Serre type relations with mysterious \(q\)-powers and inhomogenous terms; this was extended by Kolb to $\imath$quantum groups of certain Kac-Moody type \cite{Ko14}. A conceptual presentation for quasi-split {\texti}quantum groups of arbitrary Kac-Moody type was given in \cite{CLW21}. This formulation relies on the \texti-divided powers introduced in \cite{BW18a} (see also \cite{BeW18}). The simple expression for the {\texti}Serre relations given in \cite{CLW21} is formally the same as the usual \(q\)-Serre relations, with the \texti-divided powers playing the role of the Lusztig divided powers. 
Casper, Kolb, and Yakimov found a different Serre presentation in the quasi-split case, expressed using some special polynomials instead of the {\texti}-divided powers \cite{CKY21}. This approach was later extended to the general Kac-Moody type in \cite{KY21}. 

However, the {\texti}Serre relations via {\texti}-divided powers are desirable for several applications. First, they are formulae in the integral form of \(\U^\imath_\vs\) for suitable $\vs$, which are expected to play a basic role in categorification. Moreover, this form of the Serre presentation was essential for establishing the {\texti}Hall algebra realization of quasi-split $\imath$quantum groups \cite{LW20}. In addition, it allows for natural extension to the higher order Serre-Lusztig relations using $\imath$divided powers \cite{CLW21b}, which has further led to relative braid group formulas of $\imath$quantum groups of Kac-Moody type; see \cite{CLW21b, Z22}. Finally, they are instrumental in the study of $\imath$quantum groups at roots of 1 \cite{BS22}.

The goal of this paper is to give a self-contained and conceptual new proof of the {\texti}Serre relations and Serre-Lusztig relations of minimal degree; the original proofs in \cite{CLW21, CLW21b} were long and computational. We develop a new approach based on the adjoint operator. This approach has also led to new relations which seem difficult to establish otherwise.

It is well known that the \(q\)-Serre relations for a quantum group can be expressed compactly using the adjoint action; cf. \cite{Jan96}. In the present paper, we establish an analogous formulation for the {\texti}Serre and Serre-Lusztig relations of minimal degree. To show that this adjoint operator formulation is equivalent to the formulation in \cite{CLW21, CLW21b}, we prove a new comultiplication formula for the {\texti}-divided powers, building on \cite{CW23}. 

We then give a direct proof of this adjoint operator reformulation and hence obtain self-contained new proofs of the {\texti}Serre and Serre-Lusztig relations of minimal degree. Specifically, we use \cite[Theorems 2.10, 3.6]{BeW18}, which assert that for the split rank \(1\) {\texti}quantum group with parameter \(\vs = q^{-1}\), the $\imath$divided power \(B_{\overline n}^{(n + 1)}\) annihilates the simple \(\Uqsl2\)-module of highest weight \(q^n\). After slightly strengthening this result (see Lemma \ref{thm:BMinimalPolynomial} and Lemma~ \ref{thm:strongBMinimalPolynomial}), we apply it to the adjoint representation to obtain quick proofs of the reformulated {\texti}Serre and Serre-Lusztig relations. 

The paper is organized as follows.
In Section~\ref{sec:prelim}, we recall some of the basics of $\imath$quantum groups, including  $\imath$-divided powers and comultiplication. In Section~\ref{sec:reformulation}, we obtain a new comultiplication formula for the $\imath$-divided powers and apply it to obtain new formulations of the $\imath$Serre relations and Serre-Lusztig relations in split $\imath$quantum groups via the quantum adjoint action. Finally, in Section~\ref{sec:newproof}, we use these reformulations to give new proofs of the $\imath$Serre and Serre-Lusztig relations, and we establish some new relations using the same methods.

\vspace{2mm}

\paragraph{\bf Acknowledgements.}
The author would like to thank Weiqiang Wang for his incredibly helpful advice and discussion, without which this paper would not have been possible. The author's undergraduate research is supported by Wang's NSF grant (DMS-2001351).

\section{The preliminaries}
\label{sec:prelim}

\paragraph{\bf Quantum groups.}
Let \((\I, \cdot)\) be a Cartan datum. For \(i, j \in \I\), we set \(a_{ij} = 2\frac{i \cdot j}{i \cdot i}\) and \(\epsilon_i = \frac{i \cdot i}{2}\). A root datum of type \((\I, \cdot)\) consists of finitely generated free abelian groups \(X\) and \(Y\), a perfect pairing \(\langle\cdot, \cdot\rangle : Y \times X \to \Z\), and elements \(\alpha_i \in X\) and \(h_i \in Y\) associated to every \(i \in \I\) such that \(\langle h_i, \alpha_j\rangle = a_{ij}\) for every \(i, j \in \I\); cf. \cite{Lus93}. We will assume that the root datum is \(X\)-regular and \(Y\)-regular, so \(\{\alpha_i : i \in \I\}\) is linearly independent in \(X\) and \(\{h_i : i \in \I\}\) is linearly independent in \(Y\).

For \(i \in \I\), we define \(q_i \in \C(q)\) by \(q_i = q^{\epsilon_i}\). We define the quantum integers and quantum factorial as follows: \begin{equation}
    [n]_i = \frac{q_i^n - q_i^{-n}}{q_i - q_i^{-1}}; \qquad [n]_i^! = [n]_i [n - 1]_i \dots [1]_i.
\end{equation} 

Recall that the Drinfeld-Jimbo quantum group \(\U\) is the \(\C(q)\)-algebra with generators \(E_i, F_i,\) and \(K_h\) for all \(i \in \I\) and \(h \in Y\), subject to the relations \begin{align}
    K_0 = 1, \qquad K_h K_{h'} &= K_{h + h'}; \tag{R1}\label{eq:krelations} \\
    K_h E_j K_{-h} &= q^{\langle h, \alpha_j \rangle} E_j; \tag{R2}\label{eq:erelation} \\
    K_h F_j K_{-h} &= q^{-\langle h, \alpha_j \rangle} F_j; \tag{R3}\label{eq:frelation} \\
    E_i F_j - F_j E_i &= \delta_{i j} \frac{\K_i - \K_i^{-1}}{q_i - q_i^{-1}}; \tag{R4}\label{eq:efcommutator} \\
    \sum_{r + s = 1 - a_{ij}}  E_i^{(r)} E_j E_i^{(s)} &= 0; \tag{R5}\label{eq:eqSerre} \\
    \sum_{r + s = 1 - a_{ij}} (-1)^s   F_i^{(r)} F_j F_i^{(s)} &= 0, \tag{R6}\label{eq:fqSerre}
\end{align} 
for all \(i, j \in \I\), \(h, h' \in Y\), where $\K_i = K_{h_i}^{\epsilon_i}$, and Lusztig's divided powers are defined by \(E_i^{(n)} = E_i^n/ [n]_i^!, F_i^{(n)} =F_i^n/ [n]_i^!\).

We recall the Hopf algebra structure on \(\U\); cf. \cite{Jan96, Lus93}. The antipode \(S : \U \to \U\) is the unique antiautomorphism satisfying 
\[
    S(K_h) = K_{-h}; \qquad S(E_i) = -\K_i^{-1} E_i; \qquad S(F_i) = -F_i \K_i,
\]
for all \(i \in \I\) and \(h \in Y\), 
and the comultiplication \(\Delta : \U \to \U \otimes \U\) is the unique algebra homomorphism satisfying \[
    \Delta(K_h) = K_h \otimes K_h; \quad
    \Delta(E_i) = E_i \otimes 1 + \K_i \otimes E_i; \quad
    \Delta(F_i) = F_i \otimes \K_i^{-1} + 1 \otimes F_i.
\]

\paragraph{\bf The {\texti}quantum groups.}
Let \(\tau\) be an involution of \((\I, \cdot)\). We assume that \(\tau\) extends to an involution of \(X\) and an involution of \(Y\) such that the pairing \(\langle \cdot, \cdot \rangle\) is preserved by \(\tau\). Let \(\Iblack \subset \I\) be a subdatum of finite type. We denote \(\Iwhite = \I \setminus \Iblack\). Let \(\wblack\) be the longest element in the Weyl group of \(\Iblack\). We denote 
\[
\Isplit = \{i \in \Iwhite : \tau i = i = \wblack i\}.
\]
The pair \((\I =\Iblack \cup \Iwhite, \tau)\) is required to be admissible in the sense of \cite[Definition 2.3]{Ko14}.

Fix a choice of parameters \(\vs = \of{\vs_i}_{i \in \Iwhite}\) satisfying the constraints $\vs_i =\vs_{\tau i}$ if $a_{i, \tau i}=0$. Following \cite{Let99, Ko14}, we define the {\texti}quantum group \(\U^\imath_\vs\) to be the subalgebra of \(\U\) generated by all \(E_j\) and \(F_j\) for \(j \in \Iblack\), all \(K_h\) for \(h \in Y\) with \(\tau h = -\wblack h\), and all \[
    B_i \coloneqq F_i + \vs_i T_\wblack(E_{\tau i})\K_i^{-1}
\]
for \(i \in \Iwhite\).
Here \(T_\wblack\) corresponds to \(T_{\wblack, +1}''\) in \cite[Ch. 37]{Lus93}.

For \(i \in \Isplit\), the subalgebra of \(\U\) generated by \(E_i, F_i, \K_i, \text{ and } \K_i^{-1}\) will be denoted \(\Ui\). The operator \(T_\wblack\) acts trivially on this subalgebra, so in particular we have \(B_i = F_i + \vs_i E_i K_i^{-1}\). We will introduce the following additional notation: 
\begin{align}
    \Y_i = \vs_i E_i \K_i^{-1}; \qquad
    \hbracket{a}{n}_i = \prod_{i = 1}^n \frac{q_i^{4a + 4i - 4}\K_i^{-2} - 1}{q_i^{4i} - 1}.
\end{align}
We will also define the divided powers of \(\Y_i\): \(\Y_i^{(n)} = \Y_i^n/[n]_i^!\).

When \(i \in \Isplit\), we can identify the polynomial subalgebra of \(\U\) generated by \(B_i\) with the split rank \(1\) {\texti}quantum group. Following \cite{CLW21} (which slightly generalizes the definition in \cite{BW18a, BeW18} to allow arbitrary \(\vs_i\)) we define the \texti-divided powers, depending on a choice of parity \(\overline p \in \{\overline 0, \overline 1\}\):
\begin{align}
        B_{i, \overline 0}^{(n)} &= \frac{1}{[n]!}\begin{cases}
            B_i\prod_{j=1}^k (B_i^2-\blue{q_i\vs_i}[2j]_{i}^2) & \case{n = 2k + 1} \\
            \prod_{j=1}^{k} (B_i^2-\blue{q_i\vs_i}[2j-2]_{i}^2) & \case{n = 2k}
        \end{cases} \label{eq:evenDividedPowerDefinition} \\
        B_{i, \overline 1}^{(n)} &= \frac{1}{[n]!}\begin{cases}
            B_i\prod_{j=1}^k (B_i^2-\blue{q_i\vs_i}[2j-1]_{i}^2) & \case{n = 2k + 1} \\
            \prod_{j=1}^k (B_i^2-\blue{q_i\vs_i}[2j-1]_{i}^2) & \lastcase{n = 2k}
        \end{cases} \label{eq:oddDividedPowerDefinition}
    \end{align}
These \texti-divided powers are related to the special case where \(\vs_i = q_i^{-1}\) by a certain rescaling automorphism.
\begin{definition}\label{def:rescalingauto}
    For \(\lambda \in \C(q)^*\), define \(\xi_\lambda\) to be the unique endomorphism of \(\U\) which satisfies \begin{align*}
        \xi_\lambda(E_i) = \lambda E_i; \quad
        \xi_\lambda(F_i) = \lambda^{-1} F_i; \quad
        \xi_\lambda(K_h) = K_h,
    \end{align*}
    for all \(i \in \I\), \(h \in Y\).
\end{definition}

We list several key properties, which the reader can verify directly.
\begin{enumerate}
    \item For any \(\lambda \in \C(q)^*\), \(\xi_\lambda\) is a Hopf algebra automorphism.
    \item The map \(C(q)^* \to \Aut (\U) \) that sends \(\lambda\) to \(\xi_\lambda\) is a group homomorphism.
    \item For \(i \in \Isplit\) and \(\lambda \in \C(q)^*\), \(\xi_\lambda \big(\Y_i^{(n)}\big) = \lambda^{n}\Y_i^{(n)}\), and \(\xi_\lambda \big(F_i^{(n)}\big) = \lambda^{-n}F_i^{(n)}\).
    \item For \(i \in \Isplit\) and \(u \in \Ui\), \(S^2(u) = \xi_{q_i}^{-2}\of{u}\).
    \item For \(h \in Y\), \(i \in \Isplit\), and \(u \in \Ui\), \(K_h u K_{-h} = \xi_{q^{\langle h, \alpha_i \rangle}}(u)\).
    \item For \(i \in \Isplit\), \(n \geq 0\), and \(\overline p \in \{\overline 0, \overline 1\}\), set \(\widetilde{B_{i, \overline p}^{(n)}}\) to be the \texti-divided powers with parameter \(q_i^{-1}\). By working in an extension of \(\C(q)\), we may set \(z = \sqrt{q_i \vs_i}\). Then \(\xi_z\bigg(\widetilde{B_{i, \overline p}^{(n)}}\bigg) = z^{-n} {B_{i, \overline p}^{(n)}}\).
\end{enumerate}

Explicit formulae have been found for the \(\imath\)-divided powers \(B_{i, \overline p}^{(n)}\) in terms of the Chevalley generators. We will only need the even parity case, given (up to the application of an appropriate rescaling automorphism; cf. \cite{CW23}) in \cite[Proposition 2.7]{BeW18}:
\begin{equation}\label{eq:idvidedpowers}
    B_{i, \overline 0}^{(n)} = \sum_{\substack{a + 2c \leq n \\ a, c \geq 0}} k_{i, n, c, a} F_i^{(n - 2c - a)}\hbracket{1 - c + \floor{\frac{n - 1}{2}}}{c}_i\Y_i^{(a)},
\end{equation}
where \(k_{i, n, a, c}\) is given by \[
    k_{i, n, a, c} = \begin{cases}
        (-1)^c q_i^{3c + a(n - 2c - a)}\blue{(q_i \vs_i)^c} & \case[is even]{n} \\
        (-1)^c q_i^{c + a(n - 2c - a)}\blue{(q_i \vs_i)^c} & \lastcase[is odd]{n}
    \end{cases}
\]

Our goal is to study the adjoint action of \(B_{i, \overline{1-n}}^{(n)}\) on \(\U\). We shall begin by studying \(\Delta\of{B_{i, \overline{1-n}}^{(n)}}\). For any \(i \in \Isplit\), define \(T_{i, n, r}\) to be the unique elements of \(\Ui\) satisfying \begin{equation}\label{eq:comultT}
    \Delta\of{B_{i, \overline{1 - n}}^{(n)}} = \sum_{r + s = n} B_{i, \overline{1 - n}}^{(s)} \otimes T_{i, n, r}.
\end{equation}

From \cite[Theorem 4.2]{CW23} and \cite[Theorem 5.1]{CW23}, 
we have
\begin{equation}\label{eq:Texplicit}
    T_{i, n, r} = \sum_{\substack{a + 2c \leq r \\ a, c \geq 0}} t_{i, n, r, c, a} \Y_i^{(a)} \hbracket{-\floor{\frac{r - 1}{2}}}{c}_i \K_i^{r - n} F_i^{(r - 2c - a)},
\end{equation}
with \(t_{i, n, r, c, a}\) given by \[
    t_{i, n, r, c, a} = q_i^{\binom{2c + 1}{2} + (r-2c)(r-n) - a(r - 2c - a)}\blue{(q_i \vs_i)^c}.
\]

In this paper, we will establish a more compact formula for \(T_{i, n, r}\) (and thus for $\Delta\of{B_{i, \overline{1 - n}}^{(n)}}$)
and use that to find a formula for the adjoint action of \(B_{i, \overline{1-n}}^{(n)}\) on \(\U\).

\section{Reformulation of {\texti}Serre and Serre-Lusztig relations}
\label{sec:reformulation}

\subsection{New comultiplication formula for \texti-divided powers}

We begin by seeking a new expression for \(T_{i, n, r}\) different from \eqref{eq:Texplicit}. 

\begin{lemma}\label{thm:antipodes}
For any \(i \in \Isplit\), the following identities hold: \begin{align}
    S\of{F_i^{(n)}} &= (-1)^n q_i^{2\binom{n + 1}{2}} \K_i^n F_i^{(n)}; \label{eq:fantipode} \\
    S\of{\Y_i^{(n)}} &= (-1)^n q_i^{2 \binom{n}{2}} \Y_i^{(n)} \K_i^n \label{eq:eantipode}; \\
    S\of{\hbracket{a}{n}_i} &= (-1)^n q_i^{2n(n + 2a - 1)} \K_i^{2n}\hbracket{1 - n - a}{n}_i \label{eq:hbracketantipode}.
\end{align}
\end{lemma}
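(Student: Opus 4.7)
The plan is to use that $S$ is an antiautomorphism to reduce each identity to its base case, and then normal-order using the $q$-commutation relations between $\K_i$ and the generators $F_i$, $E_i$. Since the three identities are handled almost identically, I will emphasize the comparison between them and flag where \eqref{eq:hbracketantipode} requires extra care.

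For \eqref{eq:fantipode}, from $S(F_i)=-F_i\K_i$ and the antihomomorphism property, $S(F_i^n)=(-F_i\K_i)^n$. Applying $\K_i F_i=q_i^{-2}F_i\K_i$ repeatedly to collect the $\K_i$'s on the right yields $(-1)^n q_i^{-n(n-1)}F_i^n\K_i^n$, and a final application of $F_i^n\K_i^n=q_i^{2n^2}\K_i^n F_i^n$ puts $\K_i^n$ on the left with total $q_i$-exponent $-n(n-1)+2n^2=n(n+1)=2\binom{n+1}{2}$. Dividing by $[n]_i^!$ gives the claim.

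The proof of \eqref{eq:eantipode} is parallel. First I compute the base case $S(\Y_i)=-\Y_i\K_i$ directly: from $\Y_i=\vs_i E_i\K_i^{-1}$ and $S(E_i)=-\K_i^{-1}E_i$, one has $S(\Y_i)=\vs_i\K_i(-\K_i^{-1}E_i)=-\vs_i E_i=-\Y_i\K_i$. Using now $\K_i\Y_i=q_i^{2}\Y_i\K_i$ (opposite sign to the $F_i$ case), normal-ordering $(\Y_i\K_i)^n$ gives $q_i^{n(n-1)}\Y_i^n\K_i^n$. Since $\K_i^n$ is already on the right here, no final swap is needed, and the claim follows after dividing by $[n]_i^!$.

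For \eqref{eq:hbracketantipode}, $S$ is $\C(q)$-linear and sends $\K_i^{-2}\mapsto\K_i^2$, so it acts factor-by-factor on the defining product, giving $S(\hbracket{a}{n}_i)=\prod_{j=1}^n(q_i^{4a+4j-4}\K_i^2-1)/(q_i^{4j}-1)$. The key manipulation is to rewrite each numerator as $q_i^{4a+4j-4}\K_i^2-1=-q_i^{4a+4j-4}\K_i^2(q_i^{-4a-4j+4}\K_i^{-2}-1)$; reindexing $j\mapsto n-j+1$ in the resulting product then identifies the product of the parenthesized expressions with $\hbracket{1-n-a}{n}_i$, while collecting the $n$ prefactors contributes $(-1)^n\K_i^{2n}q_i^{2n(n+2a-1)}$ via $\sum_{j=1}^n(4a+4j-4)=2n(n+2a-1)$. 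This third identity is the most bookkeeping-heavy step, but there is no conceptual obstacle anywhere in the lemma: all three parts are short direct calculations, and the only risk is an index or exponent error in the last one.
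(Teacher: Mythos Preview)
Your proof is correct and follows essentially the same approach as the paper: a direct computation using the antiautomorphism property of $S$, the base cases $S(F_i)=-F_i\K_i$ and $S(\Y_i)=-\Y_i\K_i$, and normal-ordering via the $q$-commutation relations, together with the factor-by-factor manipulation and reindexing for \eqref{eq:hbracketantipode}. The only cosmetic difference is that you split the normal-ordering for \eqref{eq:fantipode} into two steps (first $F_i^n\K_i^n$, then swap to $\K_i^nF_i^n$), whereas the paper writes this in one line.
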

\begin{proof}
First, we can compute \[
    S\of{F_i^{(n)}} = \frac{1}{[n]_i^!}\of{-F_i \K_i}^n = \frac{(-1)^n q_i^{2\binom{n + 1}{2}}}{[n]_i^!} \K_i^n F_i^n = (-1)^n q_i^{2\binom{n + 1}{2}} \K_i^n F_i^{(n)}.
\]
Next, we can observe \[
    S\of{\Y_i} = S\of{\vs_i E_i \K_i^{-1}} = -\vs_i E_i = - \Y_i \K_i,
\]
from which \eqref{eq:eantipode} follows by an argument analogous to the proof of \eqref{eq:fantipode}. Finally, \begin{align*}
    S\of{\hbracket{a}{n}_i} &= \prod_{i = 1}^n \frac{q_i^{4a + 4i - 4}\K_i^{2} - 1}{q_i^{4i} - 1} \\
    &= (-1)^n q_i^{4na - 4n + 4\binom{n + 1}{2}} \K_i^{2n} \prod_{i = 1}^n \frac{q_i^{-4a - 4i + 4} \K_i^{-2} - 1}{q_i^{4i} - 1} \\
    &= (-1)^n q_i^{2n(n + 2a - 1)} \K_i^{2n} \prod_{i = 1}^n \frac{q_i^{-4a - 4n + 4i} \K_i^{-2} - 1}{q_i^{4i} - 1} \\
    &= (-1)^n q_i^{2n(n + 2a - 1)} \K_i^{2n} \hbracket{1 - n - a}{n}_i.
\end{align*}
The lemma is proved. 
\end{proof}

Recall the automorphism \(\xi_{q_i}\) given by Definition \ref{def:rescalingauto}. We improve \cite[Theorem 4.2]{CW23} and \cite[Theorem 5.1]{CW23} (also see \eqref{eq:Texplicit}) as follows.

\begin{theorem}\label{thm:sformula}
    For \(i \in \Isplit\) and \(n \geq 0\), \[
        \Delta\of{B_{i, \overline{1 - n}}^{(n)}} = \sum_{r + s = n} (-1)^r B_{i, \overline{1 - n}}^{(s)} \otimes \K_i^{-n} \xi_{q_i}^{n + 1}\Big(S\big(B_{i, \overline 0}^{(r)}\big)\Big).
    \]
\end{theorem}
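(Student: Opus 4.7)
In view of \eqref{eq:comultT}, the theorem is equivalent to the identity
$$T_{i,n,r} = (-1)^r \K_i^{-n}\, \xi_{q_i}^{n+1}\bigl(S\bigl(B_{i, \overline 0}^{(r)}\bigr)\bigr)$$
in $\Ui$ for each $0 \leq r \leq n$, which I would verify directly by transforming the right-hand side (using \eqref{eq:idvidedpowers}) into the form \eqref{eq:Texplicit}.

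First, I would apply $S$ term by term to \eqref{eq:idvidedpowers}. Because $S$ is an anti-homomorphism, the three factors $F_i^{(r-2c-a)}$, $\hbracket{1-c+\floor{(r-1)/2}}{c}_i$, $\Y_i^{(a)}$ reverse order, and each is transformed via Lemma~\ref{thm:antipodes}. Crucially, the antipode of the h-bracket symbol produces $\hbracket{-\floor{(r-1)/2}}{c}_i$, exactly the symbol appearing in \eqref{eq:Texplicit}. Next, $\xi_{q_i}^{n+1}$ scales $\Y_i^{(a)}$ by $q_i^{(n+1)a}$ and $F_i^{(r-2c-a)}$ by $q_i^{-(n+1)(r-2c-a)}$, while $\K_i$ and the h-bracket symbol are fixed. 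Finally, multiplying on the left by $\K_i^{-n}$ and commuting it past $\Y_i^{(a)}$ (using \eqref{eq:erelation}, which yields a factor $q_i^{-2na}$) and past the h-bracket symbol (no factor, since the latter is polynomial in $\K_i^{-2}$) collects every $\K_i$ into a single $\K_i^{r-n}$ placed immediately to the left of $F_i^{(r-2c-a)}$, putting each summand into the exact monomial shape of \eqref{eq:Texplicit}.

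With the monomials matched, the comparison reduces to scalars. The sign tally is painless: the contributions $(-1)^c$ from $k_{i,r,c,a}$, $(-1)^a$ from $S(\Y_i^{(a)})$, $(-1)^c$ from the antipode of the h-bracket symbol, $(-1)^{r-2c-a}$ from $S(F_i^{(r-2c-a)})$, and the leading $(-1)^r$ multiply to $+1$, matching the positive sign in $t_{i,n,r,c,a}$; the factor $(q_i\vs_i)^c$ in $k_{i,r,c,a}$ likewise agrees with that in $t_{i,n,r,c,a}$. The main obstacle is the pure $q_i$-exponent bookkeeping: the three antipodes, the automorphism $\xi_{q_i}^{n+1}$, and the commutation of $\K_i^{-n}$ all contribute nontrivial exponents quadratic in $(a,c,r,n)$, and the formula for $k_{i,r,c,a}$ itself splits by the parity of $r$ through the floor $\floor{(r-1)/2}$. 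I would collect all contributions into one closed-form exponent and verify, in the two parities of $r$ separately, that it coincides with $\binom{2c+1}{2} + (r-2c)(r-n) - a(r-2c-a)$ from \eqref{eq:Texplicit}. This last step is mechanical once Lemma~\ref{thm:antipodes} is in hand, but demands care.
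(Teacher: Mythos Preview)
Your proposal is correct and follows essentially the same route as the paper: start from \eqref{eq:idvidedpowers}, apply the antipode termwise via Lemma~\ref{thm:antipodes}, then use $\xi_{q_i}^{n+1}$ and a $\K_i$-commutation to land on the monomial shape in \eqref{eq:Texplicit}, with the remaining work being a scalar (exponent) comparison. The only cosmetic difference is the order of the last two operations---the paper factors out $\K_i^{n}$ on the left first and applies $\xi_{q_i}^{n+1}$ afterward, whereas you do the reverse---and the paper presents the exponent identity as a single formula valid for both parities of $r$ rather than splitting into cases, but these are immaterial.
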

\begin{proof}
    From equation \eqref{eq:idvidedpowers}, we have \begin{equation*}
        B_{i, \overline 0}^{(r)} = \sum_{\substack{a + 2c \leq r \\ a, c \geq 0}} 
        k_{i, r, c, a}
        F_i^{(r - 2c - a)}
        \hbracket{1 - c + \floor{\frac{r - 1}{2}}}{c}_i
        \Y_i^{(a)}.
    \end{equation*}
    By Lemma \ref{thm:antipodes}, we can compute
    \begin{equation*}
        S\of{B_{i, \overline 0}^{(r)}} = \sum_{\substack{a + 2c \leq r \\ a, c \geq 0}} 
        k'_{i, r, c, a}
        \Y_i^{(a)} \K_i^a
        \K_i^{2c}\of{\hbracket{-\floor{\frac{r - 1}{2}}}{c}_i}
        \K_i^{r - 2c - a} F_i^{(r - 2c - a)},
    \end{equation*}
    with \[
        k'_{i, r, c, a} = (-1)^{r - 3c}q_i^{2\binom{a}{2} + 2\binom{r - 2c - a + 1}{2} + 2c\of{c + 2\of{1 - c + \floor{\frac{r - 1}{2}}} - 1}}k_{i, r, c, a}.
    \]
    We can simplify \[
        k'_{i, r, c, a} = (-1)^r q_i^{\binom{2c + 1}{2} + a^2 - 2a + (r - a + 1)(r - 2c)}\blue{(q_i \vs_i)^c}.
    \]
    We can then pull out \(\K_i^n\) on the left to obtain
    \begin{equation*}
        S\of{B_{i, \overline 0}^{(r)}} = \K_i^{n} \sum_{\substack{a + 2c \leq r \\ a, c \geq 0}} 
        k_{i, n, r, c, a}''
        \Y_i^{(a)}
        \of{\hbracket{-\floor{\frac{r - 1}{2}}}{c}_i}
        \K_i^{r - n}
        F_i^{(r - 2c - a)},
    \end{equation*}
    where \[
        k_{i, n, r, c, a}'' = q_i^{-2na}k_{i, r, c, a}'.
    \]
    But we can compute \begin{align*}
        k_{i, n, r, c, a}'' &= (-1)^r q_i^{\binom{2c + 1}{2} + a^2 - 2a + (r - a + 1)(r - 2c) - 2na}\blue{(q_i \vs_i)^c} \\
        &= (-1)^r q_i^{\binom{2c + 1}{2} - a(r - 2c - a) + (r - 2c)(r - n) + (n + 1)(r - 2c - 2a)}\blue{(q_i \vs_i)^c} \\
        &= (-1)^r q_i^{(n + 1)(r - 2c - 2a)}t_{i, n, r, c, a},
    \end{align*}
    so by applying \(\xi_{q_i}^{n + 1}\), we obtain \begin{equation*}
        \xi_{q_i}^{n + 1}\Big(S\big(B_{i, \overline 0}^{(r)}\big)\Big) = (-1)^r \K_i^{n} T_{i, n, r},
    \end{equation*}
    and the result follows.
\end{proof}

\subsection{The adjoint operator}
For \(u, v \in \U\), if \(\Delta(u) = \sum_{t} u_t \otimes u^t\), the adjoint action of \(u\) on \(v\) is defined by \[
    \ad(u)(v) = \sum_{t} u_t v S\of{u^t}.
\]
Then \(\ad : \U \to \End(\U)\) is an algebra homomorphism.

\begin{proposition}
\label{thm:oppositeparityadformula}
    For \(i \in \Isplit\), \(n \geq 0\), and \(u \in \U\), we have
    \[
        \ad\of{B_{i, \overline{1 - n}}^{(n)}}(u) = \sum_{r + s = n}(-1)^r B_{i, \overline{1 - n}}^{(s)} u \xi_{q_i}^{n - 1}\of{B_{i, \overline 0}^{(r)}} \K_i^n.
    \]
\end{proposition}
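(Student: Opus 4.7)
The proof is essentially a direct calculation starting from the comultiplication formula in Theorem~\ref{thm:sformula}. By definition of the adjoint action, if we write the comultiplication in Sweedler-like form, then $\ad(X)(u) = \sum_t X_t\, u\, S(X^t)$. Applying this to
\[
\Delta(B_{i,\overline{1-n}}^{(n)}) = \sum_{r+s=n}(-1)^r B_{i,\overline{1-n}}^{(s)} \otimes \K_i^{-n}\xi_{q_i}^{n+1}\!\bigl(S\bigl(B_{i,\overline 0}^{(r)}\bigr)\bigr),
\]
the first tensor factor comes out of $\ad$ unchanged, while the second tensor factor appears inside an outer antipode. Thus the whole task reduces to simplifying
\[
S\!\left(\K_i^{-n}\,\xi_{q_i}^{n+1}\!\bigl(S\bigl(B_{i,\overline 0}^{(r)}\bigr)\bigr)\right).
\]

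I would simplify this expression in three short steps. First, use that $S$ is an antiautomorphism together with $S(\K_i^{-n}) = \K_i^n$ to write the outer antipode as $S\bigl(\xi_{q_i}^{n+1}(S(B_{i,\overline 0}^{(r)}))\bigr)\cdot \K_i^n$. Second, use property (1) from the preliminaries, which asserts that $\xi_{q_i}$ is a Hopf algebra automorphism, hence it commutes with $S$; this gives $\xi_{q_i}^{n+1}\bigl(S^2(B_{i,\overline 0}^{(r)})\bigr)\cdot \K_i^n$. Third, apply property (4), which says $S^2 = \xi_{q_i}^{-2}$ on $\Ui$; since $B_{i,\overline 0}^{(r)} \in \Ui$, this turns $\xi_{q_i}^{n+1}\circ S^2$ into $\xi_{q_i}^{n-1}$, producing $\xi_{q_i}^{n-1}\bigl(B_{i,\overline 0}^{(r)}\bigr)\K_i^n$.

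Substituting back yields the claimed formula. I do not foresee a genuine obstacle: the content is already packed into Theorem~\ref{thm:sformula} and the listed properties of $\xi_\lambda$. The only thing to verify carefully is that $S$ commutes with $\xi_{q_i}$ (which follows from $\xi_{q_i}$ being a Hopf automorphism, so that $S = \xi_{q_i}^{-1}\circ S \circ \xi_{q_i}$), and that $B_{i,\overline 0}^{(r)}$ indeed lies in $\Ui$, which is immediate from its definition as a polynomial in $B_i = F_i + \vs_i E_i\K_i^{-1}$.
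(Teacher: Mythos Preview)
Your proposal is correct and follows essentially the same route as the paper's proof: both start from the comultiplication formula of Theorem~\ref{thm:sformula}, apply the definition of the adjoint action, and then simplify the antipode term. The paper compresses your three steps into the single phrase ``which simplifies to,'' but the underlying manipulations (antiautomorphism of $S$, commutation of $S$ with the Hopf automorphism $\xi_{q_i}$, and $S^2 = \xi_{q_i}^{-2}$ on $\Ui$) are exactly the ones you identified.
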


\begin{proof}
    By definition, we have \[
        \ad\of{B_{i, \overline{1 - n}}^{(n)}}(u) = \sum_{r + s = n} B_{i, \overline{1 - n}}^{(s)} u S\of{T_{i, n, r}}.
    \]
    Then by Theorem \ref{thm:sformula}, \[
        \ad\of{B_{i, \overline{1 - n}}^{(n)}}(u) 
        = \sum_{r + s = n} B_{i, \overline{1 - n}}^{(s)} u S\bigg((-1)^r \K_i^{-n} \xi_{q_i}^{n + 1}\bigg(S\Big(B_{i, \overline 0}^{(r)}\Big)\bigg)\bigg),
    \]
    which simplifies to \[
        \ad\of{B_{i, \overline{1 - n}}^{(n)}}(u) 
        = \sum_{r + s = n} (-1)^r B_{i, \overline{1 - n}}^{(s)} u \xi_{q_i}^{n - 1}\of{B_{i, \overline 0}^{(r)}}\K_i^{n}.
    \]
    This completes the proof. 
\end{proof}

\subsection{New formulation of {\texti}Serre and Serre-Lusztig relations}

By a standard result for quantum groups (see \cite[4.18]{Jan96}), for any \(i, j \in \I\), \begin{equation}\label{eq:qSerreAdjointFormula}
    \ad\of{F_i^{(1 - a_{ij})}}\of{F_j\K_j} = \sum_{r + s = 1 - a_{ij}} (-1)^r F_i^{(s)}F_jF_i^{(r)}\K_j \K_i^{1 - a_{ij}}.
\end{equation}
Using Proposition \ref{thm:oppositeparityadformula}, we can prove an {\texti}quantum analog.

\begin{proposition}
\label{thm:iSerreAdjoint}
Let \(i \in \Isplit\) and \(j \in \Iwhite\) and $i\neq j$. Then
\[
    \ad\of{B_{i, \overline{a_{ij}}}^{(1 - a_{ij})}}\of{B_j\K_j} = \sum_{r + s = 1 - a_{ij}} (-1)^r B_{i, \overline{a_{ij}}}^{(s)}B_jB_{i, \overline 0}^{(r)} \K_j \K_i^{1 - a_{ij}}.
\]
\end{proposition}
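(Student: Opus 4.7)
The plan is to deduce this statement directly from Proposition \ref{thm:oppositeparityadformula} by specializing $n = 1 - a_{ij}$ and $u = B_j \K_j$, and then simplifying the resulting expression by exploiting the commutation relations between $\K_j$ and elements of $\U_i$. The key observation that makes this work is that conjugation by $\K_j$ acts on $\U_i$ as the rescaling automorphism $\xi_{q_i}^{a_{ij}}$, which precisely cancels the $\xi_{q_i}^{n-1} = \xi_{q_i}^{-a_{ij}}$ appearing in Proposition \ref{thm:oppositeparityadformula}.

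More concretely, applying Proposition \ref{thm:oppositeparityadformula} gives
\[
    \ad\of{B_{i, \overline{a_{ij}}}^{(1 - a_{ij})}}\of{B_j \K_j} = \sum_{r + s = 1 - a_{ij}} (-1)^r B_{i, \overline{a_{ij}}}^{(s)} B_j \K_j \, \xi_{q_i}^{-a_{ij}}\of{B_{i, \overline 0}^{(r)}} \K_i^{1 - a_{ij}},
\]
so the proposition reduces to the identity $\K_j \, \xi_{q_i}^{-a_{ij}}\of{B_{i, \overline 0}^{(r)}} = B_{i, \overline 0}^{(r)} \K_j$. Since $i \in \Isplit$, we have $B_i = F_i + \vs_i E_i \K_i^{-1} \in \U_i$, and hence $B_{i, \overline 0}^{(r)} \in \U_i$. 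Thus I would invoke property (5) of $\xi_\lambda$ listed after Definition \ref{def:rescalingauto}: for any $u \in \U_i$,
\[
    \K_j u \K_j^{-1} = K_{\epsilon_j h_j} u K_{-\epsilon_j h_j} = \xi_{q^{\epsilon_j \langle h_j, \alpha_i\rangle}}(u) = \xi_{q^{\epsilon_j a_{ji}}}(u).
\]
Symmetrizability of the Cartan datum gives $\epsilon_j a_{ji} = i \cdot j = \epsilon_i a_{ij}$, so $\K_j u \K_j^{-1} = \xi_{q_i}^{a_{ij}}(u)$. Applying this to $u = \xi_{q_i}^{-a_{ij}}\of{B_{i, \overline 0}^{(r)}} \in \U_i$ yields exactly the needed identity, and substituting back produces the claimed formula.

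The main ``obstacle'' is really just keeping careful track of the interplay between the automorphism $\xi_{q_i}$ and the commutation with $\K_j$; once one recognizes that this is how the $\xi_{q_i}^{n-1}$ factor in Proposition \ref{thm:oppositeparityadformula} is designed to be absorbed, the argument is essentially a direct computation. There is no deeper analytical or combinatorial obstacle, and in particular no further use of the explicit formulas \eqref{eq:idvidedpowers} or \eqref{eq:Texplicit} is needed beyond what has already gone into Proposition \ref{thm:oppositeparityadformula}.
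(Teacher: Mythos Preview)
Your proposal is correct and follows exactly the same approach as the paper's proof: apply Proposition~\ref{thm:oppositeparityadformula} with \(n = 1 - a_{ij}\) and \(u = B_j\K_j\), then commute \(\K_j\) past \(\xi_{q_i}^{-a_{ij}}\!\big(B_{i,\overline 0}^{(r)}\big)\). The paper simply writes ``moving the \(\K_j\) term to the right'' for this step, whereas you have spelled out the mechanism via property~(5) and the symmetrizability identity \(\epsilon_j a_{ji} = \epsilon_i a_{ij}\).
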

\begin{proof}
    By Proposition \ref{thm:oppositeparityadformula}, \[
        \ad\of{B_{i, \overline{a_{ij}}}^{(1 - a_{ij})}}\of{B_j \K_j}
        = \sum_{r + s = 1 - a_{ij}} (-1)^r B_{i, \overline{a_{ij}}}^{(s)} B_j \K_j  \xi_{q_i}^{-a_{ij}}\of{B_{i, \overline 0}^{(r)}} \K_i^{1 - a_{ij}}.
    \]
    Moving the \(\K_j\) term to the right, we obtain \[
        \ad\of{B_{i, \overline{a_{ij}}}^{(1 - a_{ij})}}\of{B_j \K_j}
        = \sum_{r + s = 1 - a_{ij}} (-1)^r B_{i, \overline{a_{ij}}}^{(s)} B_j B_{i, \overline 0}^{(r)} \K_j  \K_i^{1 - a_{ij}}.
    \]
    This completes the proof. 
\end{proof}

Proposition~\ref{thm:iSerreAdjoint} has the following immediate consequence, which proves a conjecture of Wang (via private communication). 

\begin{theorem}  \label{cor:eq}
For  \(i \in \Isplit\) and \(j \in \Iwhite\) with $i\neq j$, the following two relations are equivalent: 
\begin{enumerate}
    \item 
    $\ad\of{B_{i, \overline{a_{ij}}}^{(1 - a_{ij})}}\of{B_j\K_j} =0$
     in $\U$; 
    \item $\sum_{r + s = 1 - a_{ij}} (-1)^r B_{i, \overline{a_{ij}}}^{(s)}B_jB_{i, \overline 0}^{(r)}=0$ in $\U^\imath$.
\end{enumerate}
\end{theorem}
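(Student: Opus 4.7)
The plan is to read off the theorem as an almost immediate corollary of Proposition~\ref{thm:iSerreAdjoint}, paying attention only to (a) invertibility of the trailing $K$-factor and (b) the observation that all terms in statement (2) already belong to $\U^\imath$.

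First I would invoke Proposition~\ref{thm:iSerreAdjoint} verbatim, which gives the identity
\[
    \ad\of{B_{i, \overline{a_{ij}}}^{(1 - a_{ij})}}\of{B_j\K_j} \;=\; \Bigg(\sum_{r + s = 1 - a_{ij}} (-1)^r B_{i, \overline{a_{ij}}}^{(s)}B_jB_{i, \overline 0}^{(r)}\Bigg) \K_j \K_i^{1 - a_{ij}}
\]
inside $\U$. Since $\K_j \K_i^{1 - a_{ij}}$ is a product of the invertible generators $K_h$, it is a unit in $\U$; so the left-hand side vanishes in $\U$ if and only if the parenthesised sum vanishes in $\U$. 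This yields the equivalence between statement~(1) and the vanishing of that sum viewed inside $\U$.

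To pass from vanishing in $\U$ to vanishing in $\U^\imath$, I would simply note that each factor in each summand lies in $\U^\imath$: by definition $B_j \in \U^\imath$ for $j \in \Iwhite$, and the $\imath$-divided powers $B_{i, \overline{a_{ij}}}^{(s)}$ and $B_{i, \overline 0}^{(r)}$ are, by \eqref{eq:evenDividedPowerDefinition}--\eqref{eq:oddDividedPowerDefinition}, polynomial expressions in $B_i$, hence lie in the subalgebra $\U^\imath \subset \U$. Since $\U^\imath$ embeds into $\U$, an element of $\U^\imath$ is zero in $\U^\imath$ if and only if it is zero in $\U$, giving the equivalence between the vanishing of the sum in $\U$ and statement~(2).

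There is no real obstacle here; the whole argument is a one-line rearrangement of Proposition~\ref{thm:iSerreAdjoint} combined with the two structural remarks above. The only subtlety worth making explicit in the write-up is that Proposition~\ref{thm:iSerreAdjoint} produces the sum \emph{with} a trailing $\K_j\K_i^{1-a_{ij}}$, and that this trailing factor is harmless because it is invertible; this is exactly what allows one to strip it off and read the theorem as stated.
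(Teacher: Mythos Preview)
Your proposal is correct and matches the paper's approach: the paper simply says the theorem is an immediate consequence of Proposition~\ref{thm:iSerreAdjoint}, and you have spelled out exactly the two routine observations (invertibility of $\K_j\K_i^{1-a_{ij}}$ and the fact that the sum already lies in the subalgebra $\U^\imath\subset\U$) needed to read the equivalence off that identity.
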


The relation in Theorem~\ref{cor:eq}(2) is known as $\imath$Serre relation in $\U^\imath$, which was first formulated and established in \cite{CLW21}.

\begin{remark}
    For \emph{split} {\texti}quantum groups, the {\texti}Serre relations are all the defining relations; cf. \cite{CLW21}. If we define \(\widehat \U\) to be the algebra generated by \(E_i\), \(F_i\), \(\K_i\), and \(\K_i^{-1}\) for all \(i \in \I\), where we impose relations \eqref{eq:krelations}-\eqref{eq:efcommutator} but not the \(q\)-Serre relations, we can define \(\widehat \U^\imath_\vs\) to be the subalgebra generated by \(F_i + \vs_i E_i \K_i^{-1}\) for \(i \in \I\). We then have a canonical projection \(\pi : \widehat \U \to \U\) that restricts to a projection \(\vbar{\pi}_{\widehat \U^\imath_\vs} \to \U^\imath_\vs\). The proof of Proposition~ \ref{thm:iSerreAdjoint} still applies in \(\widehat U\), and Proposition~ \ref{thm:iSerreAdjoint} and \cite[Theorem 3.1]{CLW21} together imply that the kernel of \(\vbar{\pi}_{\widehat \U^\imath_\vs}\) is equal to the intersection of \(\widehat \U^\imath_\vs\) with the two-sided ideal in \(\widehat \U\) generated by \(\left\{\ad\of{B_{i, \overline{a_{ij}}}^{(1 - a_{ij})}}\of{B_j \K_j} : i \not= j \in \I \right\}\).
\end{remark}

Proposition \ref{thm:iSerreAdjoint} can be strengthened as follows.
\begin{proposition}\label{thm:iSerreLusztigAdjoint}
For \(n \geq 1 \), \(i \in \Isplit\), and \(j \in \Iwhite\), 
\[
    \ad\of{B_{i, \overline{n a_{ij}}}^{(1 - n a_{ij})}}\of{B_j^n\K_j^n} = \sum_{r + s = 1 - n a_{ij}} (-1)^r B_{i, \overline{n a_{ij}}}^{(s)}B_j^nB_{i, \overline 0}^{(r)} \K_j^n \K_i^{1 - n a_{ij}}.
\]
\end{proposition}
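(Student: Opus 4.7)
The plan is to reduce Proposition~\ref{thm:iSerreLusztigAdjoint} to a direct application of Proposition~\ref{thm:oppositeparityadformula}, mirroring the proof of Proposition~\ref{thm:iSerreAdjoint} essentially verbatim, with the single element $B_j\K_j$ replaced by the power $B_j^n\K_j^n$. The only place where the value of $n$ and the index $a_{ij}$ interact is through a single commutation past $\K_j^n$, so the argument should go through with almost no new input.

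First I would substitute $1 - na_{ij}$ for the formal variable $n$ appearing in Proposition~\ref{thm:oppositeparityadformula}; this turns the parity $\overline{1-n}$ into $\overline{na_{ij}}$, the exponent $n-1$ into $-na_{ij}$, and the trailing $\K_i^n$ into $\K_i^{1-na_{ij}}$. Applying the resulting formula with $u = B_j^n\K_j^n$ gives
\[
    \ad\of{B_{i, \overline{n a_{ij}}}^{(1 - n a_{ij})}}\of{B_j^n\K_j^n} = \sum_{r + s = 1 - n a_{ij}}(-1)^r B_{i, \overline{n a_{ij}}}^{(s)}\, B_j^n \K_j^n\, \xi_{q_i}^{-n a_{ij}}\of{B_{i, \overline 0}^{(r)}}\, \K_i^{1 - n a_{ij}}.
\]
All that remains is to move $\K_j^n$ to the right of $\xi_{q_i}^{-n a_{ij}}\of{B_{i, \overline 0}^{(r)}}$.

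Since $B_{i,\overline 0}^{(r)} \in \Ui$, property~(5) of $\xi_\lambda$ gives $\K_j^n\, v\, \K_j^{-n} = \xi_{q^{n\epsilon_j\langle h_j,\alpha_i\rangle}}(v)$ for any $v \in \Ui$. The standard identity $\epsilon_j\langle h_j, \alpha_i\rangle = \epsilon_j a_{ji} = i\cdot j = a_{ij}\epsilon_i$ rewrites this as conjugation by $\xi_{q_i^{n a_{ij}}}$, which exactly cancels the $\xi_{q_i}^{-n a_{ij}}$ already applied to $B_{i, \overline 0}^{(r)}$. Thus
\[
    \K_j^n\, \xi_{q_i}^{-n a_{ij}}\of{B_{i, \overline 0}^{(r)}} = B_{i, \overline 0}^{(r)}\, \K_j^n,
\]
and substituting this back yields the claimed formula.

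There is essentially no serious obstacle: the proof is a direct generalization of Proposition~\ref{thm:iSerreAdjoint}, with the only computation being the $q$-power bookkeeping of the $\K_j^n$/$\xi_{q_i}^{-na_{ij}}$ commutation above, which is just the $n$-th iterate of what was used in the $n=1$ case. If any subtlety arises it will be in confirming that the relevant quantities in $\Ui$ really do transform diagonally under the rescaling $\xi$ (which follows from property~(3) applied to the explicit expansion \eqref{eq:idvidedpowers}), but this is already implicit in the proof of Proposition~\ref{thm:iSerreAdjoint} and requires no new argument.
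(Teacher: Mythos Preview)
Your proposal is correct and follows essentially the same approach as the paper: apply Proposition~\ref{thm:oppositeparityadformula} with $1 - na_{ij}$ in place of $n$ and $u = B_j^n\K_j^n$, then commute $\K_j^n$ past $\xi_{q_i}^{-na_{ij}}\big(B_{i,\overline 0}^{(r)}\big)$ to undo the rescaling. The paper simply says ``moving the $\K_j^n$ term to the right'' where you spell out the $\epsilon_j a_{ji} = \epsilon_i a_{ij}$ bookkeeping, but the arguments are otherwise identical.
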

\begin{proof}
    By Proposition \ref{thm:oppositeparityadformula}, \[
        \ad\of{B_{i, \overline{n a_{ij}}}^{(1 - n a_{ij})}}\of{B_j^n\K_j^n}
        = \sum_{r + s = 1 - n a_{ij}} (-1)^r B_{i, \overline{n a_{ij}}}^{(s)} B_j^n\K_j^n  \xi_{q_i}^{-n a_{ij}}\of{B_{i, \overline 0}^{(r)}} \K_i^{1 - n a_{ij}}.
    \]
    Moving the \(\K_j^n\) term to the right, we obtain \[
        \ad\of{B_{i, \overline{n a_{ij}}}^{(1 - n a_{ij})}}\of{B_j^n\K_j^n}
        = \sum_{r + s = 1 - n a_{ij}} (-1)^r B_{i, \overline{n a_{ij}}}^{(s)} B_j^n B_{i, \overline 0}^{(r)} \K_j^n  \K_i^{1 - n a_{ij}}.
    \]
    This finishes the proof.
\end{proof}

The following theorem follows directly from Proposition~\ref{thm:iSerreLusztigAdjoint}. 

\begin{theorem}  \label{cor:eq2}
For \(n \geq 1 \), \(i \in \Isplit\) and \(j \in \Iwhite\) with $i\neq j$, the following two relations are equivalent:
\begin{enumerate}
    \item \(\ad\of{B_{i, \overline{n a_{ij}}}^{1 - na_{ij}}}\of{B_j^n \K_j^n} = 0\) in \(\U\);
    \item \(\sum_{r + s = 1 - n a_{ij}} (-1)^r B_{i, \overline{n a_{ij}}}^{(s)}B_j^nB_{i, \overline 0}^{(r)} = 0\) in \(\U^\imath_\vs\).
\end{enumerate}
\end{theorem}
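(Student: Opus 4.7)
The plan is to deduce the equivalence directly from Proposition~\ref{thm:iSerreLusztigAdjoint}, which identifies the adjoint action on the left-hand side of (1) with (essentially) the $\imath$Serre--Lusztig sum on the left-hand side of (2). The bulk of the work is already done in that proposition, so the task here reduces to a clean bookkeeping argument about invertibility and about passing between $\U$ and its subalgebra $\U^\imath_\vs$.

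The first step is to invoke Proposition~\ref{thm:iSerreLusztigAdjoint} to rewrite
$$\ad\of{B_{i, \overline{n a_{ij}}}^{(1 - n a_{ij})}}\of{B_j^n\K_j^n} = \left( \sum_{r + s = 1 - n a_{ij}} (-1)^r B_{i, \overline{n a_{ij}}}^{(s)}B_j^nB_{i, \overline 0}^{(r)} \right) \K_j^n \K_i^{1 - n a_{ij}}.$$
Since $\K_j^n \K_i^{1 - n a_{ij}}$ is invertible in $\U$, right-multiplication by it is a bijection on $\U$, so the left side vanishes if and only if the parenthesized sum vanishes in $\U$. This already gives the equivalence of (1) with the statement that the sum in (2) is zero in $\U$.

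The second step is to observe that each summand $B_{i, \overline{n a_{ij}}}^{(s)}B_j^nB_{i, \overline 0}^{(r)}$ is a product of generators of $\U^\imath_\vs$ and hence lies in $\U^\imath_\vs$; since by construction $\U^\imath_\vs$ is a subalgebra of $\U$, the inclusion is injective, so the sum vanishes in $\U$ precisely when it vanishes in $\U^\imath_\vs$. That is exactly condition (2). There is no real obstacle: the substantive content of the theorem is encoded in Proposition~\ref{thm:iSerreLusztigAdjoint}, and the argument above is essentially a one-line invertibility remark together with the trivial observation that $\U^\imath_\vs \hookrightarrow \U$.
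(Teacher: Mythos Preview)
Your proof is correct and matches the paper's approach exactly: the paper states that the theorem ``follows directly from Proposition~\ref{thm:iSerreLusztigAdjoint},'' and you have simply spelled out the two trivial observations (invertibility of $\K_j^n \K_i^{1 - n a_{ij}}$ and the injectivity of $\U^\imath_\vs \hookrightarrow \U$) that make this immediate.
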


The relation in Theorem~\ref{cor:eq2}(2) is known as Serre-Lusztig (i.e., higher order $\imath$Serre) relation of minimal degree, which first appeared in \cite[Theorem~A]{CLW21b}.

\section{New proof of {\texti}Serre and Serre-Lusztig relations}\label{sec:newproof}

By reformulating the {\texti}Serre relations using the adjoint action, we can prove them using the representation theory of \(\Uqsl2\). Fix \(i \in \Isplit\), and let \(L(n)\) be the simple \(\Ui\)-module of highest weight \(q_i^n\). The following result was originally proved in \cite[Theorems 2.10, 3.6]{BeW18} in the special case where \(\vs_i = q_i^{-1}\). We will show that the general case follows.
\begin{lemma} [\cite{BeW18}]
\label{thm:BMinimalPolynomial}
    The element \(B_{i, \overline n}^{(n + 1)}\) annihilates \(L(n)\).  
\end{lemma}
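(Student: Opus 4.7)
My plan is to deduce the general case from the specific case $\vs_i = q_i^{-1}$ already established in \cite[Theorems 2.10, 3.6]{BeW18}. The bridge will be the rescaling automorphism $\xi_\lambda$ of Definition~\ref{def:rescalingauto}. Writing $\widetilde{B_{i,\overline p}^{(n)}}$ for the $\imath$-divided power corresponding to the parameter $q_i^{-1}$ and setting $z = \sqrt{q_i\vs_i}$, property (6) listed after Definition~\ref{def:rescalingauto} yields
\[
    B_{i,\overline n}^{(n+1)} = z^{n+1}\,\xi_z\!\left(\widetilde{B_{i,\overline n}^{(n+1)}}\right),
\]
so it suffices for me to show that $\xi_z\!\left(\widetilde{B_{i,\overline n}^{(n+1)}}\right)$ annihilates $L(n)$.

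The key observation will be that $\xi_z$ fixes $\K_i$. Consequently, precomposing the action of $\Ui$ on $L(n)$ with $\xi_z$ produces another $\Ui$-module structure on the same underlying vector space with the same highest weight $q_i^n$, and by uniqueness of the simple module of that highest weight it is isomorphic to $L(n)$ itself. From this I would extract the equivalence: for any $x\in \Ui$, the operator $x$ annihilates $L(n)$ if and only if $\xi_z(x)$ does. Applied to the known annihilator $x = \widetilde{B_{i,\overline n}^{(n+1)}}$, this finishes the argument.

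The only subtlety I foresee is that $z=\sqrt{q_i\vs_i}$ need not lie in $\C(q)$. This is harmless: annihilation of $L(n)$ by an element of $\Ui$ defined over $\C(q)$ is an equality of matrix entries with $\C(q)$-coefficients, which is insensitive to a quadratic extension of scalars. I do not anticipate a substantive obstacle, since the whole reduction is a pullback along an automorphism, and both ingredients (property~(6) and the $\vs_i = q_i^{-1}$ case) are supplied.
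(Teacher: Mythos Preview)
Your proposal is correct and follows essentially the same route as the paper: both reduce to the $\vs_i=q_i^{-1}$ case via the rescaling automorphism $\xi_z$ with $z=\sqrt{q_i\vs_i}$, using property~(6) after Definition~\ref{def:rescalingauto}. The only difference is cosmetic: the paper writes down the intertwiner $L(n)\to L(n)$ explicitly as $F_i^{(k)}v_0\mapsto z^{-k}F_i^{(k)}v_0$ and checks equivariance on generators, whereas you invoke uniqueness of the simple $\Ui$-module of highest weight $q_i^n$ to obtain the same isomorphism (and hence the equality of annihilators) abstractly.
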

\begin{proof}
    Let \(v_0\) be a highest weight vector in \(L(n)\). For \(k \geq 0\), we define \(v_k = F^{(k)}.v_0\). The elements \(v_0, v_1, \dots, v_n\) form a basis for \(L(n)\). By working in an extension of \(C(q)\), we may set \(z = \sqrt{q_i \vs_i}\). Let \(\widetilde{B_{i, \overline p}^{(n)}}\) be the {\texti}-divided powers with parameter \(q_i^{-1}\). We have \(\xi_z\of{\widetilde{B_{i, \overline{p}}^{(n)}}} = z^{-n} B_{i, \overline{p}}^{(n)}\). There is a uniquely defined linear isomorphism \(\xi_z : L(n) \to L(n)\) that sends \(\xi_z : F_i^{(k)} v_0 \mapsto z^{-k} F_i^{(k)} v_0\). By checking on generators, one can verify that for all \(u \in \Ui\) and all \(k \geq 0\), \(\xi_z\of{u.v_k} = \xi_z(u).\xi_z(v_k)\). In particular, by \cite[Theorems~ 2.10, 3.6]{BeW18}, \[
        B_{i, \overline n}^{(n + 1)}.v_k = z^n \xi_z\of{\widetilde{B_{i, \overline n}^{(n + 1)}}.\xi_z^{-1}(v_k)} = 0.
    \]
The lemma is proved. 
\end{proof}

We can now give a new short proof of the {\texti}Serre relations for {\texti}quantum groups. (In contrast, the original proof of $\imath$Serre relation in \cite{CLW21} was long and computational.)

\begin{theorem}[{\texti}Serre Relations]
    For \(i \in \Isplit\) and \(j \in \Iwhite\) with \(j \not= i\), we have
    \[
        \sum_{r + s = 1 - a_{ij}} (-1)^r B_{i, \overline{a_{ij}}}^{(s)}B_jB_{i, \overline 0}^{(r)} = 0.
    \]
\end{theorem}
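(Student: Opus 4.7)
The plan is to invoke Theorem~\ref{cor:eq} in order to reduce the \texti Serre relation to the equivalent adjoint identity
\[
    \ad\of{B_{i, \overline{a_{ij}}}^{(1 - a_{ij})}}\of{B_j \K_j} = 0 \quad \text{in } \U,
\]
and then to verify this identity using the structure of $\U$ as an $\ad\of{\Ui}$-module together with Lemma~\ref{thm:BMinimalPolynomial}. Setting $n = -a_{ij}$ and noting the identities $\overline{-a_{ij}} = \overline{a_{ij}}$ and $(-a_{ij})+1 = 1-a_{ij}$, Lemma~\ref{thm:BMinimalPolynomial} asserts that $B_{i, \overline{a_{ij}}}^{(1 - a_{ij})}$ annihilates the simple $\Ui$-module $L(-a_{ij})$; the strategy is therefore to exhibit $B_j \K_j$ inside an $\ad\of{\Ui}$-submodule of $\U$ isomorphic to a direct sum of copies of $L(-a_{ij})$.

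First I would decompose $B_j \K_j$ into $\ad\of{\K_i}$-weight vectors. A short calculation using $\K_j^{-1}\K_j = 1$ yields $B_j \K_j = F_j \K_j + \vs_j T_{\wblack}\of{E_{\tau j}}$, which in the quasi-split case specializes to $F_j \K_j + \vs_j E_{\tau j}$. Noting that the hypothesis $i \in \Isplit$ together with $i \neq j$ forces $i \neq \tau j$ (since $\tau i = i$), a direct check using relations \eqref{eq:erelation}-\eqref{eq:efcommutator} shows that $F_j \K_j$ is a highest weight vector for $\ad\of{\Ui}$ of weight $q_i^{-a_{ij}}$ (annihilated by $\ad(E_i)$), while $E_{\tau j}$ is a lowest weight vector for $\ad\of{\Ui}$ of weight $q_i^{a_{ij}}$ (annihilated by $\ad(F_i)$).

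Second, the already-established $q$-Serre relations \eqref{eq:eqSerre}-\eqref{eq:fqSerre}, read through \eqref{eq:qSerreAdjointFormula} and its $E$-side analog, yield $\ad\of{F_i^{(1-a_{ij})}}\of{F_j \K_j} = 0$ and $\ad\of{E_i^{(1-a_{ij})}}\of{E_{\tau j}} = 0$. Combined with the highest- and lowest-weight properties from the previous step, each summand then generates a finite-dimensional integrable $\ad\of{\Ui}$-submodule of $\U$ isomorphic to $L(-a_{ij})$. Applying Lemma~\ref{thm:BMinimalPolynomial} summand by summand, and summing, yields $\ad\of{B_{i, \overline{a_{ij}}}^{(1-a_{ij})}}\of{B_j \K_j} = 0$, which via Theorem~\ref{cor:eq} is the desired \texti Serre relation.

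The delicate point is the non-quasi-split case, where $T_{\wblack}\of{E_{\tau j}}$ is a considerably more intricate element of $\U$ than $E_{\tau j}$ itself. One must verify that this element still generates, under $\ad\of{\Ui}$, a module whose simple constituents all have highest weight at most $q_i^{-a_{ij}}$; this rests on the compatibility of the braid operator $T_{\wblack}$ with the subalgebra $\Ui$ when $i \in \Isplit$ (so that $\wblack i = i$), with some care required because $T_{\wblack}$ is an algebra but not a coalgebra automorphism.
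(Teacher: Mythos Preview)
Your approach is essentially the same as the paper's: reduce via Theorem~\ref{cor:eq}, decompose $B_j\K_j = F_j\K_j + \vs_j T_{\wblack}(E_{\tau j})$, show each summand lies in an $\ad(\Ui)$-submodule isomorphic to $L(-a_{ij})$, and invoke Lemma~\ref{thm:BMinimalPolynomial}.

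The one place where your outline is weaker than the paper is your ``delicate point'' about the non-quasi-split case. In fact this point dissolves completely, and your worry about $T_{\wblack}$ not being a coalgebra automorphism is a red herring. Since $i \in \Isplit$, the paper records (Section~\ref{sec:prelim}) that $T_{\wblack}$ acts trivially on $\Ui$, i.e.\ it fixes $E_i$, $F_i$, $\K_i$ elementwise. The adjoint operators $\ad(E_i)(u) = E_iu - \K_i u \K_i^{-1}E_i$ and $\ad(F_i)(u) = (F_iu - uF_i)\K_i$ are given by explicit algebraic formulas in $E_i, F_i, \K_i$, so for any algebra automorphism fixing these three elements --- regardless of its behaviour on the coproduct --- one has $\ad(E_i)\circ T_{\wblack} = T_{\wblack}\circ \ad(E_i)$ and likewise for $F_i$. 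The paper simply writes this out:
\[
    \ad(F_i)\big(T_{\wblack}(E_{\tau j})\big) = T_{\wblack}\big(F_iE_{\tau j}-E_{\tau j}F_i\big)\K_i = 0,
\]
\[
    \ad\big(E_i^{(1-a_{ij})}\big)\big(T_{\wblack}(E_{\tau j})\big) = T_{\wblack}\Big(\ad\big(E_i^{(1-a_{ij})}\big)(E_{\tau j})\Big) = 0,
\]
the last vanishing by the $q$-Serre relation \eqref{eq:eqSerre} together with $a_{i,\tau j} = a_{\tau i,\tau j} = a_{ij}$ (as $\tau i = i$). So $T_{\wblack}(E_{\tau j})$ generates a copy of $L(-a_{ij})$ just as directly as $E_{\tau j}$ does in the quasi-split case, and no separate treatment is needed.
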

\begin{proof}
    By Theorem~ \ref{cor:eq}, it suffices to show that
    \[
        \ad\of{B_{i, \overline{a_{ij}}}^{1 - a_{ij}}}\of{B_j \K_j} = 0.
    \]
    View \(\U\) as a \(\Ui\) module via the adjoint action. We can check \begin{align*}
        \ad\of{F_i}\of{T_\wblack(E_{\tau j})} 
        &= (F_i T_\wblack(E_{\tau j}) - T_\wblack(E_{\tau j}) F_i)\K_i \\
        &= T_\wblack(F_i E_{\tau j} - E_{\tau j} F_i)\K_i 
        = 0,
    \end{align*}
    so \(T_\wblack(E_{\tau j})\) is a lowest weight vector. For any \(u \in \U\), \begin{align*}
        \ad\of{E_i}\of{T_\wblack(u)} 
        = E_i T_\wblack(u) - \K_i T_\wblack(u) \K_i^{-1} E_i \\
        = T_\wblack\of{E_i u - \K_i u \K_i^{-1} E_i} 
        = T_\wblack\of{\ad\of{E_i}(u)},
    \end{align*}
    so in particular, by \eqref{eq:eqSerre}, we have \[
        \ad\of{E_i^{1 - a_{ij}}\of{T_\wblack\of{E_{\tau j}}}} = T_\wblack\of{\ad\of{E_i^{1 - a_{ij}}\of{E_{\tau j}}}} = 0.
    \]
    Therefore, the submodule generated by \(T_\wblack(E_{\tau j})\) must be a finite dimensional simple module. Since \(T_\wblack(E_{\tau j})\) is a lowest weight vector of weight \(q_i^{a_{ij}}\), this module has highest weight \(q_i^{-a_{ij}}\). Therefore, \[
        \ad\of{B_{i, \overline{a_{ij}}}^{(1 - a_{ij})}}\of{T_\wblack(E_{\tau j})} = 0.
    \]
    Similarly, we can check \[
        \ad\of{E_i}\of{F_j \K_j} = E_i F_j \K_j - \K_i F_j \K_j \K_i^{-1} E_i = (E_i F_j - F_j E_i)\K_j = 0,
    \]
    so \(F_j \K_j\) is a highest weight vector. By the \(q\)-Serre relations, it generates a finite-dimensional simple module of highest weight \(q_i^{-a_{ij}}\). Therefore, \[
        \ad\of{B_{i, \overline{a_{ij}}}^{(1 - a_{ij})}}\of{F_j \K_j} = 0.
    \]
    Finally, we may conclude \begin{align*}
       & \ad\of{B_{i, \overline{a_{ij}}}^{(1 - a_{ij})}}\of{B_j \K_j} 
        \\
        & = \ad\of{B_{i, \overline{a_{ij}}}^{(1 - a_{ij})}}\of{F_j \K_j} + \vs_j \ad\of{B_{i, \overline{a_{ij}}}^{(1 - a_{ij})}}\of{T_\wblack(E_{\tau j})} 
        = 0.
    \end{align*}
The theorem is proved. 
\end{proof}

For the Serre-Lusztig relations, we will need a stronger variant of Lemma \ref{thm:BMinimalPolynomial}.
\begin{lemma}\label{thm:strongBMinimalPolynomial}
    For any \(n, k \geq 0\), \(B_{i, \overline{kn}}^{(kn + 1)}\) annihilates \(L(n)^{\otimes k}\).
\end{lemma}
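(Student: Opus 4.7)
The plan is to reduce the statement to Lemma~\ref{thm:BMinimalPolynomial} via the quantum Clebsch--Gordan decomposition of $L(n)^{\otimes k}$ as a $\Ui$-module. Since $\Ui \cong \Uqsl2$ (with $q$ replaced by $q_i$) and we work over the generic field $\C(q)$, the standard Clebsch--Gordan rule yields a decomposition
\[
L(n)^{\otimes k} \;\cong\; \bigoplus_{\substack{0 \le m \le kn \\ m \equiv kn \,(\mathrm{mod}\,2)}} L(m)^{\oplus c_{n,k,m}}
\]
for certain multiplicities $c_{n,k,m} \ge 0$. Thus it suffices to show that $B_{i,\overline{kn}}^{(kn+1)}$ annihilates each simple summand $L(m)$ that appears.

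I next claim that for every such $m$, the element $B_{i,\overline{m}}^{(m+1)}$ is a factor of $B_{i,\overline{kn}}^{(kn+1)}$ inside the commutative polynomial ring $\C(q)[B_i]$. This is read off directly from the explicit product formulas \eqref{eq:evenDividedPowerDefinition}--\eqref{eq:oddDividedPowerDefinition} by splitting cases on the parity of $kn$. If $kn = 2l$ is even then $m = 2m'$ with $m' \le l$, and
\[
B_{i,\overline{0}}^{(2l+1)} \;=\; \frac{[2m'+1]_i^{!}}{[2l+1]_i^{!}}\, B_{i,\overline{0}}^{(2m'+1)} \prod_{j=m'+1}^{l}\of{B_i^2 - q_i\vs_i [2j]_i^2};
\]
if $kn = 2l+1$ is odd then $m = 2m'+1$ with $m' \le l$, and
\[
B_{i,\overline{1}}^{(2l+2)} \;=\; \frac{[2m'+2]_i^{!}}{[2l+2]_i^{!}}\, B_{i,\overline{1}}^{(2m'+2)} \prod_{j=m'+2}^{l+1}\of{B_i^2 - q_i\vs_i [2j-1]_i^2}.
\]
The underlying observation is that the initial block of quadratic factors (together with the lone factor of $B_i$ present in the even case) appearing in $B_{i,\overline{m}}^{(m+1)}$ is a subset of those appearing in $B_{i,\overline{kn}}^{(kn+1)}$ precisely when $m \le kn$ and $m \equiv kn \pmod 2$.

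Combining the preceding factorization with Lemma~\ref{thm:BMinimalPolynomial}, which asserts $B_{i,\overline{m}}^{(m+1)}.L(m) = 0$, forces $B_{i,\overline{kn}}^{(kn+1)}.L(m) = 0$ on every simple summand, whence $B_{i,\overline{kn}}^{(kn+1)}$ annihilates $L(n)^{\otimes k}$. The only genuinely delicate point will be the parity bookkeeping in the middle step; this is straightforward once one unpacks the piecewise definition of the \texti-divided powers, and no serious obstacle is anticipated.
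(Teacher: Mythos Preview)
Your proof is correct and follows essentially the same approach as the paper: decompose $L(n)^{\otimes k}$ via Clebsch--Gordan into summands $L(m)$ with $m\le kn$ and $m\equiv kn\pmod 2$, observe that $B_{i,\overline{m}}^{(m+1)}=B_{i,\overline{kn}}^{(m+1)}$ divides $B_{i,\overline{kn}}^{(kn+1)}$ in $\C(q)[B_i]$, and apply Lemma~\ref{thm:BMinimalPolynomial} on each summand. The paper states the divisibility in one line without writing out the explicit factorizations you supply, but the argument is the same.
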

\begin{proof} 
    We may assume \(n, k \geq 1\). It is well known that all the simple summands of \(L(n)^{\otimes k}\) must be of the form \(L(kn - 2t)\) for some \(t \geq 0\). Since \(B_{i, \overline{kn}}^{(kn - 2t + 1)}\) divides \(B_{i, \overline{kn}}^{(kn + 1)}\), by Lemma \ref{thm:BMinimalPolynomial}, \(B_{i, \overline{kn}}^{(kn + 1)}\) annihilates each simple summand of \(L(n)^{\otimes k}\).
    Therefore, \(B_{i, \overline{kn}}^{(kn + 1)}\) annihilates the whole module \(L(n)^{\otimes k}\).
\end{proof}

Additionally, recall that (see, e.g.,  \cite{JL92}) for any Hopf algebra \(H\) with comultiplication \(\Delta\), if \(x, y, z \in H\), and \(\Delta(x) = \sum_{t = 1}^k x_t \otimes x^t\), then 
\[
    \ad(x)(yz) = \sum_{t = 1}^k \ad(x_t)(y)\ad(x^t)(z).
\]
Equivalently, the multiplication map \(\nabla : H \otimes H \to H\) is an \(H\)-module homomorphism, where \(H\) is viewed as an \(H\)-module via \(\ad\) and \(H \otimes H\) is viewed as an \(H\)-module via \((\ad \otimes \ad)\Delta\).

We now give a new conceptual proof of the Serre-Lusztig relations of minimal degree for {\texti}quantum groups. (The original proof of these Serre-Lusztig relations in \cite{CLW21b} was long and computational.)

\begin{theorem}[Serre-Lusztig relations of minimal degree for {\texti}quantum groups]\label{thm:SerreLusztig}
    For all \(i \in \Isplit\), \(n \geq 0\), and \(j \in \Iwhite\) with \(j \not= i\), \[
        \sum_{r + s = 1 - na_{ij}} (-1)^r B_{i, \overline{n a_{ij}}}^{(s)}B_j^n B_{i, \overline 0}^{(r)} = 0.
    \]
\end{theorem}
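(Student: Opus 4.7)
The plan is to parallel the strategy used above for the $\imath$Serre relations. By Theorem~\ref{cor:eq2} it suffices to prove
\[
    \ad\of{B_{i, \overline{n a_{ij}}}^{(1 - n a_{ij})}}\of{B_j^n \K_j^n} = 0 \text{ in } \U.
\]
The idea is to realize $B_j^n \K_j^n$ as $\nabla^n(v_1 \otimes \cdots \otimes v_n)$ for vectors $v_k$ that all lie in a single $\Ui$-submodule $M$ of $\U$ whose composition factors are all copies of $L(-a_{ij})$, and then apply Lemma~\ref{thm:strongBMinimalPolynomial} together with the fact that $\nabla^n$ is a $\Ui$-module homomorphism.

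The key step is the telescoping factorization
\[
    B_j^n \K_j^n = v_1 v_2 \cdots v_n, \qquad v_k := \K_j^{-(k-1)} \of{B_j \K_j} \K_j^{k-1},
\]
obtained by cancelling the inner $\K_j$ powers between consecutive factors. Now $B_j \K_j = F_j \K_j + \vs_j T_\wblack(E_{\tau j})$, and each summand is an eigenvector for conjugation by $\K_j$, so each $v_k$ is a nonzero $\C(q)$-linear combination of the two vectors $F_j \K_j$ and $T_\wblack(E_{\tau j})$. From the proof of the $\imath$Serre relations, $F_j \K_j$ is an $\ad(\Ui)$-highest weight vector of weight $q_i^{-a_{ij}}$ and $T_\wblack(E_{\tau j})$ is an $\ad(\Ui)$-lowest weight vector of weight $q_i^{a_{ij}}$, each generating a simple $\Ui$-submodule isomorphic to $L(-a_{ij})$. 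Letting $M \subseteq \U$ denote the $\Ui$-submodule generated by $\{F_j \K_j,\, T_\wblack(E_{\tau j})\}$ under the adjoint action, $M$ is therefore a semisimple $\Ui$-module whose composition factors are all isomorphic to $L(-a_{ij})$, and each $v_k$ lies in $M$.

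Combining these observations gives $B_j^n \K_j^n \in \nabla^n(M^{\otimes n})$. Since $M$ is a direct sum of copies of $L(-a_{ij})$ as a $\Ui$-module, $M^{\otimes n}$ is a direct sum of copies of $L(-a_{ij})^{\otimes n}$, which by Lemma~\ref{thm:strongBMinimalPolynomial} is annihilated by $B_{i, \overline{n a_{ij}}}^{(1 - n a_{ij})}$. Because $\nabla^n \colon \U^{\otimes n} \to \U$ is a $\Ui$-module homomorphism (the iterate of the compatibility $\ad(x)(yz) = \sum \ad(x_{(1)})(y)\ad(x_{(2)})(z)$, valid since $\Ui$ is a Hopf subalgebra of $\U$), the image $\nabla^n(M^{\otimes n})$ is also annihilated, yielding $\ad(B_{i, \overline{n a_{ij}}}^{(1 - n a_{ij})})(B_j^n \K_j^n) = 0$ as required. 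The main obstacle is discovering the telescoping factorization: the most natural candidate $(B_j \K_j)^n$ does not equal $B_j^n \K_j^n$ because $\K_j$ fails to commute with $B_j$, so one must instead distribute the $\K_j$ powers asymmetrically across the $n$ factors as above. Once this factorization is in hand, everything else follows the $\imath$Serre template, with $L(-a_{ij})^{\otimes n}$ playing the role previously played by $L(-a_{ij})$.
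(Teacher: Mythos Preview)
Your proof is correct and follows essentially the same route as the paper's: the paper likewise reduces via Theorem~\ref{cor:eq2} to showing \(\ad\big(B_{i,\overline{na_{ij}}}^{(1-na_{ij})}\big)(B_j^n\K_j^n)=0\), factors \(B_j^n\K_j^n\) as a product of \(n\) elements each lying in the span of \(F_j\K_j\) and \(T_\wblack(E_{\tau j})\) (written there as \(\xi_{q_j}^{-2(k-1)}(B_j)\K_j\), which is exactly your \(v_k=\K_j^{-(k-1)}(B_j\K_j)\K_j^{k-1}\)), and then pushes through the \(\Ui\)-equivariant multiplication map from \((L(-a_{ij})\oplus L(-a_{ij}))^{\otimes n}\) combined with Lemma~\ref{thm:strongBMinimalPolynomial}. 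Your submodule \(M\) is precisely the image of the paper's map \(\psi\), so the two arguments coincide.
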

\begin{proof}
By Proposition \ref{thm:iSerreLusztigAdjoint}, it suffices to show \[
    \ad\of{B_{i, \overline{n a_{ij}}}^{1 - na_{ij}}}\of{B_j^n \K_j^n} = 0.
\]
View \(\U\) as a \(\Uqisl2\) module via the adjoint action of \(\Ui\). In the proof of the {\texti}Serre relations, we showed that the modules generated by \(T_\wblack(E_{\tau j})\) and \(F_j \K_j\) are both isomorphic to \(L(-a_{ij})\). Therefore, there is a \(\Uqisl2\)-module homomorphism \(\psi: L(-a_{ij}) \oplus L(-a_{ij}) \to \U\) whose image contains both \(T_\wblack(E_{\tau j})\) and \(F_j \K_j\). In particular, for every \(t \in \Z\), \(\xi_{q_j}^{t}(B_j)\K_j\) is contained in the image of \(\psi\). Let \(\nabla: \U^{\otimes k} \to \U\) be the multiplication map. Let \(\Psi\) be the composition \[
    \bigoplus_{t = 1}^{2^n}L(-a_{ij})^{\otimes n} \cong \of{L(-a_{ij}) \oplus L(-a_{ij})}^{\otimes n} \xrightarrow{\psi^{\otimes n}} \U^{\otimes n} \xrightarrow{\nabla} \U.
\]
Then \(\Psi\) is a homomorphism of \(\Uqisl2\) modules, and the image of \(\Psi\) contains 
\[
    B_j \K_j \xi_{q_j}^{-2}\of{B_j}\K_j \cdots \xi_{q_j}^{-2n + 2}\of{B_j} \K_j = B_j^n \K_j^n.
\]
But by Lemma \ref{thm:strongBMinimalPolynomial}, \(B_{\overline{n a_{ij}}}^{1 - na_{ij}}\) annihilates \(\bigoplus_{t = 1}^{2^n}L(-a_{ij})^{\otimes n}\), so \[
    \ad\of{B_{\overline{n a_{ij}}}^{1 - na_{ij}}}\of{B_j^n \K_j^n} = 0.
\]
The theorem is proved. 
\end{proof}

In the above proof, there was nothing special about the fact that the same \(j\) was repeated \(n\) times. The same argument can be applied to prove the following stronger version, which seems new.
\begin{theorem}
    Let \(i \in \Isplit\), and \(j_1, j_2, \dots, j_k \in \Iwhite \setminus \{i\}\). Set $
        n = \sum_{t = 1}^k a_{ij_k}.$
    Then we have
    \[
        \sum_{r + s = 1 - n} (-1)^r B_{i, \overline n}^{(s)} B_{j_1} \cdots B_{j_k} B_{i, \overline 0}^{(r)} = 0.
    \]
\end{theorem}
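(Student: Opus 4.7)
The plan is to mirror the proof of Theorem~\ref{thm:SerreLusztig}, with $B_j^n$ replaced by the product $B_{j_1} \cdots B_{j_k}$. First, I would recast the identity as an adjoint-operator vanishing in $\U$. Applying Proposition~\ref{thm:oppositeparityadformula} to $u = B_{j_1}\cdots B_{j_k}\K_{j_1}\cdots\K_{j_k}$ with exponent $1-n$ (note $n = \sum_t a_{ij_t} \leq 0$, so $1-n \geq 1$) and using that conjugation by $\K_{j_l}$ acts on $\Ui$ as $\xi_{q_i^{a_{ij_l}}}$, the product $\K_{j_1}\cdots\K_{j_k}$ conjugates $\Ui$ as $\xi_{q_i}^n$, exactly canceling the $\xi_{q_i}^{-n}$ appearing in the formula. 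Commuting the $\K_{j_l}$'s to the right would yield
\[
\ad\of{B_{i,\overline{n}}^{(1-n)}}\of{B_{j_1}\cdots B_{j_k}\K_{j_1}\cdots\K_{j_k}} = \sum_{r+s=1-n} (-1)^r B_{i,\overline{n}}^{(s)} B_{j_1}\cdots B_{j_k} B_{i,\overline{0}}^{(r)} \, \K_{j_1}\cdots\K_{j_k}\K_i^{1-n},
\]
so the theorem reduces to showing $\ad\of{B_{i,\overline{n}}^{(1-n)}}\of{B_{j_1}\cdots B_{j_k}\K_{j_1}\cdots\K_{j_k}} = 0$ in $\U$.

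Second, I would extend Lemma~\ref{thm:strongBMinimalPolynomial} to tensor products with possibly distinct highest weights: for any $m_1,\ldots,m_k \geq 0$ with $M = m_1 + \cdots + m_k$, the element $B_{i,\overline{M}}^{(M+1)}$ annihilates $L(m_1)\otimes\cdots\otimes L(m_k)$. The argument is the same as in Lemma~\ref{thm:strongBMinimalPolynomial}: every simple summand of this tensor product has the form $L(M-2t)$ for some $t \geq 0$, and by \eqref{eq:evenDividedPowerDefinition}--\eqref{eq:oddDividedPowerDefinition} together with Lemma~\ref{thm:BMinimalPolynomial}, $B_{i,\overline{M}}^{(M-2t+1)}$ kills $L(M-2t)$ and divides $B_{i,\overline{M}}^{(M+1)}$.

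Third, I would carry out the construction from the proof of Theorem~\ref{thm:SerreLusztig}. For each $t$, that proof shows both $F_{j_t}\K_{j_t}$ and $T_\wblack(E_{\tau j_t})$ generate copies of $L(-a_{ij_t})$ under the adjoint $\Ui$-action, so one can choose a $\Ui$-module homomorphism $\psi_t: L(-a_{ij_t}) \oplus L(-a_{ij_t}) \to \U$ whose image contains both vectors (and hence every $\C(q)$-linear combination of them). Let $\Psi = \nabla \circ (\psi_1 \otimes \cdots \otimes \psi_k)$, a $\Ui$-module map from the direct sum of $2^k$ copies of $L(-a_{ij_1}) \otimes \cdots \otimes L(-a_{ij_k})$ into $\U$. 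The generalized lemma above shows that $B_{i,\overline{n}}^{(1-n)}$ annihilates the domain of $\Psi$, so it suffices to exhibit $B_{j_1}\cdots B_{j_k}\K_{j_1}\cdots\K_{j_k}$ as an element of the image of $\Psi$.

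This final step is the technical heart. Writing
\[
B_{j_1}\cdots B_{j_k}\K_{j_1}\cdots\K_{j_k} = \of{\tilde B_{j_1}\K_{j_1}}\of{\tilde B_{j_2}\K_{j_2}}\cdots\of{\tilde B_{j_k}\K_{j_k}}
\]
with $\tilde B_{j_m} := \of{\K_{j_1}\cdots\K_{j_{m-1}}}^{-1} B_{j_m} \of{\K_{j_1}\cdots\K_{j_{m-1}}}$, each factor $\tilde B_{j_m}\K_{j_m}$ is a $\C(q)$-linear combination of the weight vectors $F_{j_m}\K_{j_m}$ and $T_\wblack(E_{\tau j_m})$, because conjugation by $\K$'s rescales the two summands of $B_{j_m}$ only by scalar powers of $q$. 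Hence each factor lies in the image of $\psi_m$, the product lies in the image of $\Psi$, and the theorem follows. I expect the main obstacle to be precisely this bookkeeping of $\K$-conjugation scalars, together with the need to choose each $\psi_t$ to simultaneously realize both a highest- and a lowest-weight vector of $L(-a_{ij_t})$ in the image --- which is exactly why the direct sum $L \oplus L$ (rather than a single $L$) appears.
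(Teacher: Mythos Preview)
Your proposal is correct and follows essentially the same approach as the paper: extend Lemma~\ref{thm:strongBMinimalPolynomial} to mixed tensor products $L(-a_{ij_1})\otimes\cdots\otimes L(-a_{ij_k})$, then reuse the adjoint reformulation and the $\Psi$-construction from the proof of Theorem~\ref{thm:SerreLusztig}. The paper in fact omits the details you spell out (the generalized Proposition~\ref{thm:iSerreLusztigAdjoint} and the $\K$-conjugation bookkeeping showing the product lies in the image of $\Psi$), simply declaring the remainder ``identical to the proof of Theorem~\ref{thm:SerreLusztig}''; your write-up fills these in correctly.
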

\begin{proof}
    We first need to show that \(B_{i, \overline n}^{(n + 1)}\) annihilates \(L(-a_{ij_1}) \otimes \cdots \otimes L(-a_{ij_k})\). It is clear that all simple summands of the module \(L(-a_{ij_1}) \otimes \cdots \otimes L(-a_{ij_k})\) have highest weights of the form \(q_i^{n - 2t}\) for some \(t \in \Z\). Therefore, by Lemma~\ref{thm:BMinimalPolynomial}, \(B_{i, \overline n}^{(n + 1)}\) annihilates each of these summands, so it annihilates the whole module. The rest of the proof is omitted, as it is identical to the proof of Theorem \ref{thm:SerreLusztig}.
\end{proof}


\vspace{4mm}


\end{document}